\theoremstyle{plain}
\newtheorem{theorem}{Theorem}
\newtheorem{lemma}[theorem]{Lemma}
\theoremstyle{definition}
\newtheorem{definition}[theorem]{Definition}
\author{V.~E.~Nazaikinskii}
\address{A.~Ishlinsky Institute for Problems in Mechanics,
Moscow\\ Moscow Institute of Physics and Technology, Dolgoprudny,
Moscow District}
\email{nazaikinskii@yandex.ru}
\date{10.05.2013}
\title[On a $KK$-Theoretic Counterpart of Relative Index Theorems]%
{On a $KK$-Theoretic Counterpart\\ of Relative Index Theorems}
\subjclass[2010]{46L80 (Primary); 19K35, 58J20 (Secondary)}
\begin{document}

\maketitle

\begin{abstract}
Relative index theorems, which deal with what happens with the
index of elliptic operators when cutting and pasting, are abundant
in the literature. It is desirable to obtain similar theorems for
other stable homotopy invariants, not the index alone. In the
spirit of noncommutative geometry, we prove a full-fledged
``relative index'' type theorem that compares certain elements of
the Kasparov $KK$-group $KK(A,B)$.
\end{abstract}

\subsection*{Introduction}

Relative index theorems, which deal with what happens with the
index of elliptic operators when cutting and pasting, are abundant
in the literature. Here we only mention the famous Gromov--Lawson
relative index theorem~\cite{GrLa1} for Dirac operators on complete
noncompact Riemannian manifolds and refer the reader for further
examples and bibliography to the papers~\cite{R:NaSt7,AAA}, where a
locality theorem for the relative index was proved in a rather
general setting, which not only covered many earlier-known special
cases but also permitted one to obtain a number of index formulas
for elliptic differential operators and even for Fourier integral
operators on manifolds with singularities~\cite{NSScS99}. It is
however desirable to obtain similar theorems for other stable
homotopy invariants, not the index alone. One of the first steps in
this direction was made much earlier by Bunke~\cite{Bun95}, who
considered Dirac operators on a complete noncompact Riemannian
manifold in section spaces of bundles of projective Hilbert
$B$-modules (e.g., see \cite{MisFom79}), where $B$ is a
$C^*$\nobreakdash-algebra, and obtained a relative index theorem
for such operators, the index being an element of the $K$-group
of~$B$. In the present paper, in the spirit of noncommutative
geometry, we prove a full-fledged ``relative index'' type theorem
that compares certain elements of the Kasparov $KK$-group
$KK(A,B)$. In contrast to~\cite{Bun95}, where $KK$-groups are used
with $A$ being an algebra of functions on the manifold and the
answers are only stated in terms of elements of
$K_*(B)=KK(\mathbb{C},B)$, we admit an arbitrary
\textit{noncommutative} unital $C^*$-algebra $A$ and do not
restrict ourselves to the index, even to the $K_*(B)$-valued one.

We freely use notions and notation related to $C^*$-algebras and
$KK$-theory (e.g., see \cite{Ped1,HiRo1,Bla1} and the literature
cited therein). Full proofs will be given elsewhere.

\subsection{Algebra $A$ and a partition of unity}\label{1}
Let $A$ be a unital $C^*$-algebra, and let $J_1,J_2\subset A$ be
two (closed, two-sided, $*$-)ideals such that $J_1+J_2=A$. By $J$
we denote the intersection of these ideals, $J=J_1\cap J_2$.
\begin{lemma}\label{le1}
There exist self-adjoint positive elements $\psi_1\in J_1$ and
$\psi_2\in J_2$ with
\begin{equation}\label{eq1}
    \psi_1^2+\psi_2^2=1,\qquad [\psi_1,\psi_2]=0.
\end{equation}
\end{lemma}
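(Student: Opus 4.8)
The plan is to realise $\psi_1$ and $\psi_2$ as continuous functions of a single self-adjoint element of $A$; then the relations $[\psi_1,\psi_2]=0$ and $\psi_1^2+\psi_2^2=1$ will hold automatically, and only the two membership statements $\psi_1\in J_1$, $\psi_2\in J_2$ will have to be checked.

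First I would use $J_1+J_2=A$ to write the unit as $1=b_1+b_2$ with $b_j\in J_j$. Since the $J_j$ are $*$-ideals and $1^*=1$, replacing each $b_j$ by $\frac12(b_j+b_j^*)$ keeps $b_j$ in $J_j$ and preserves $b_1+b_2=1$, so I may assume $b_1$ and $b_2$ self-adjoint. Put $h:=b_1$. Then $h$ is a bounded self-adjoint element of the unital $C^*$-algebra $A$ with $h\in J_1$ and $1-h=b_2\in J_2$.

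Next I would fix a continuous function $f\colon\mathbb{R}\to[0,1]$ with $f(0)=0$ and $f(1)=1$ (for instance $f(t)=\max\{0,\min\{1,t\}\}$) and set $g:=\sqrt{1-f^2}$, again a continuous function $\mathbb{R}\to[0,1]$, with $g(0)=1$, $g(1)=0$ and $f^2+g^2\equiv1$. Define $\psi_1:=f(h)$ and $\psi_2:=g(h)$ by the continuous functional calculus. Since $f,g\ge0$, both $\psi_j$ are self-adjoint and positive; since they are functions of the single element $h$ they commute; and $\psi_1^2+\psi_2^2=(f^2+g^2)(h)=1$.

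It remains to check the ideal memberships, which I would do by applying the unital quotient $*$-homomorphisms $\pi_j\colon A\to A/J_j$. As the continuous functional calculus is compatible with unital $*$-homomorphisms, $\pi_1(\psi_1)=f(\pi_1(h))$; but $h\in J_1$ forces $\pi_1(h)=0$, whence $\pi_1(\psi_1)=f(0)\cdot1=0$ and $\psi_1\in\ker\pi_1=J_1$. Similarly $\pi_2(h)=\pi_2(1-b_2)=1$, so $\pi_2(\psi_2)=g(1)\cdot1=0$ and $\psi_2\in J_2$, which completes the argument. I do not expect a genuine obstacle here: the one point worth noticing is that a crude self-adjoint or positive symmetrisation of the partition $1=b_1+b_2$ need not produce commuting elements — this is precisely why $\psi_1$ and $\psi_2$ are built from one $h$ — after which everything reduces to routine functional calculus.
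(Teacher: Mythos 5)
Your proof is correct and follows essentially the same route as the paper: start from a self-adjoint $h\in J_1$ with $1-h\in J_2$ and construct both $\psi_1$ and $\psi_2$ by continuous functional calculus in $h$, so that commutativity and the relation $\psi_1^2+\psi_2^2=1$ are automatic. The only difference is cosmetic — the paper takes $\psi_1=|\chi|(\chi^2+(1-\chi)^2)^{-1/2}$, $\psi_2=|1-\chi|(\chi^2+(1-\chi)^2)^{-1/2}$ where you use a clipped linear function $f$ and $g=\sqrt{1-f^2}$; your check of the ideal memberships via the quotient $*$-homomorphisms is a clean way to make explicit what the paper leaves as ``by functional calculus.''
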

\begin{proof}
Since $J_1+J_2=A$, it follows that there exists an element $\chi\in
J_1$ such that $1-\chi\in J_2$; taking the real part, we can assume
that $\chi=\chi^*$. By functional calculus, the element
$\chi^2+(1-\chi)^2$ is positive and invertible, and we set
$\psi_1=\lvert\chi\rvert(\chi^2+(1-\chi)^2)^{-1/2}$ and
$\psi_2=\lvert 1-\chi\rvert(\chi^2+(1-\chi)^2)^{-1/2}$.
\end{proof}

\subsection{Kasparov modules}\label{2}

Let $B$ be another $C^*$-algebra, and let $x=(H,\rho,F)$ be a
\textit{Kasparov module} for $(A,B)$, i.e., a triple consisting of
a Hilbert module~$H$ over~$B$, a homomorphism $\rho\colon
A\to\mathbb{B}(H)$ of~$A$ into the $C^*$-algebra $\mathbb{B}(H)$ of
adjointable operators on~$H$, and an operator $F\in\mathbb{B}(H)$
such that
\begin{equation}\label{eq2}
    [F,\rho(a)]\sim 0,\qquad \rho(a)(F^2-1)\sim0,\qquad
    \rho(a)(F-F^*)\sim0
\end{equation}
for every $a\in A$, where we write $C\sim D$ if $C-D$ lies in the
ideal $\mathbb{K}(H)\subset \mathbb{B}(H)$ of ``compact''
operators. We only consider Kasparov modules in which the
homomorphism $\rho$ is unital (and refer to these as
\textit{unital} Kasparov modules); in this case, the factor
$\rho(a)$ can be dropped in the second and third conditions in
\eqref{eq2}. The $(A,B)$-sub-bimodules
\begin{equation}\label{eq3}
    H_1=\rho[J_1]H,\quad H_2=\rho[J_2]H,
    \quad H_0=\rho[J]H
\end{equation}
of $H$ are automatically closed (the proof is similar to that
in~\cite[pp.~25--26]{HiRo1}) and hence are simultaneously Hilbert
$B$-modules. It is easily seen that $H_0=H_1\cap H_2$. Indeed, the
inclusion $H_0\subset H_1\cap H_2$ is obvious. Next, let $\xi\in
H_1\cap H_2$. Then $\rho(u_\lambda)\xi\to\xi$ and
$\rho(v_\mu)\xi\to\xi$, where $\{u_\lambda\}$ and $\{v_\mu\}$ are
approximate units for $J_1$ and $J_2$, respectively. Now it follows
from the inequality $\lVert \rho(u_\lambda
v_\mu)\xi-\xi\rVert\le\lVert \rho(v_\mu)\xi-\xi\rVert+\lVert
\rho(u_\lambda)\xi-\xi\rVert$ that there exists a subsequence of
$\{\rho(u_\lambda v_\mu)\xi\}$ that converges to $\xi$, and hence
$\xi\in H_0$, because $u_\lambda v_\mu\in J$ and $\rho(u_\lambda
v_\mu)\xi\in H_0$.

The $(A,B)$-bimodule $H$ is naturally isomorphic to the quotient
$(H_1\oplus H_2)\slash\Delta$, where $\Delta= \{(\xi_1,\xi_2)\in
H_1\oplus H_2\colon \xi_1=-\xi_2\in H_0\}$. The isomorphism is
induced by the mapping $\alpha\colon H_1\oplus H_2\to H$,
$(\xi_1,\xi_2)\mapsto\xi_1+\xi_2$, with the inverse being induced
by $\beta\colon H\to H_1\oplus H_2$,
$\xi\mapsto(\rho(\psi_1^2)\xi,\rho(\psi_2^2)\xi)$.

\subsection{Cutting and pasting}\label{3}

Let $\widetilde x=(\widetilde H,\widetilde \rho,\widetilde F)$ be
another unital Kasparov $(A,B)$-module, and let $\widetilde H_j$,
$j=0,1,2$, be the Hilbert submodules of $\widetilde H$ defined as
in \eqref{eq3}.
\begin{definition}\label{de3}
We say that $x$ and $\widetilde x$ \textit{agree on $J$} if there
is a unitary (in the sense of Hilbert modules over $B$) isomorphism
$T\colon H_0\to\widetilde H_0$ of $(A,B)$-bimodules such that, for
arbitrary $c,d\in J$, one has
\begin{equation}\label{eq4}
    T \rho(c) F \rho(d) \sim
    \widetilde\rho(c) \widetilde F \widetilde\rho(d) T.
\end{equation}
(Note that $\rho(c) F \rho(d)\in \mathbb{B}(H_0)$ and
$\widetilde\rho(c) \widetilde F \widetilde\rho(d)\in
\mathbb{B}(\widetilde H_0)$ are well defined.)
\end{definition}
Assume that $x$ and $\widetilde x$ agree on $J$. Our aim is to use
some sort of \textit{cutting-and-pasting procedure} to define a
unital Kasparov $(A,B)$-module $x\diamond\widetilde x$ that agrees
with $x$ on $J_1$ and with $\widetilde x$ on $J_2$. To this end,
consider the $(A,B)$-bimodule
\begin{equation}\label{eq5}
 H\diamond \widetilde H=(H_1\oplus\widetilde H_2)\Big\slash
 \{(\xi_1,\xi_2)\colon \xi_1\in H_0,\;
 \xi_2\in\widetilde H_0,\; T\xi_1+\xi_2=0\}.
\end{equation}
The elements of $H\diamond \widetilde H$ will be denoted by
$\xi=[(\xi_1,\xi_2)]$, and the action of $A$ on $H\diamond
\widetilde H$ will be denoted by $\rho\diamond\widetilde\rho$,
$(\rho\diamond\widetilde\rho)(\varphi)\xi= [(\rho(\varphi)\xi_1,
\rho(\varphi)\xi_2)]$. Note that $H_1$ and $\widetilde H_2$ are
naturally embedded in $H\diamond \widetilde H$ (the embeddings are
induced by those of $H_1$ and $\widetilde H_2$ in the direct sum
$H_1\oplus\widetilde H_2$), and if we identify $H_1$ and
$\widetilde H_2$ with their images under these embeddings,
\begin{equation}\label{eq6}
    H_1\simeq(H\diamond\widetilde H)_1\equiv
    (\rho\diamond\widetilde\rho)[J_1](H\diamond\widetilde H) ,
    \quad
    \widetilde H_2\simeq(H\diamond\widetilde H)_2\equiv
    (\rho\diamond\widetilde\rho)[J_2](H\diamond\widetilde H),
\end{equation}
then, for arbitrary $\xi\in H\diamond \widetilde H$ and
$\varphi_j\in J_j$, $j=1,2$, we have
$(\rho\diamond\widetilde\rho)(\varphi_1)\xi\simeq\rho(\varphi_1)\xi_1
+T^*\widetilde\rho(\varphi_1)\xi_2\in H_1$ and
$(\rho\diamond\widetilde\rho)(\varphi_2)\xi\simeq
T\rho(\varphi_2)\xi_1 +\widetilde\rho(\varphi_2)\xi_2\in\widetilde
H_2$.

From now on, to simplify the notation, we identify $H_0$ and
$\widetilde H_0$ via $T$, accordingly suppress $T$ in all the
formulas, and also write simply $\varphi$ instead of
$\rho(\varphi)$, $\widetilde\rho(\varphi)$, or
$(\rho\diamond\widetilde\rho)(\varphi)$. This will not lead to a
misunderstanding even if several representations are involved,
because which is meant is always clear from the context.

\begin{lemma}\label{le4}
The formula
\begin{equation*}
    \langle \xi,\eta\rangle_{H\diamond \widetilde H}
     =\bigl\langle \psi_1^2 \xi,
     \psi_1^2 \eta\bigr\rangle_H
     +\bigl\langle
     \psi_2^2 \xi ,
      \psi_2^2 \eta \bigr\rangle_{\widetilde H}
     +2\bigl\langle \psi_1\psi_2 \xi ,
         \psi_1\psi_2\eta \bigr\rangle_H,
\end{equation*}
where $\langle\,\boldsymbol\cdot\,,\,\boldsymbol\cdot\,\rangle_H$
and
$\langle\,\boldsymbol\cdot\,,\,\boldsymbol\cdot\,\rangle_{\widetilde
H}$ are the $B$-valued inner products on $H$ and $\widetilde H$,
respectively, specifies a well-defined $B$-valued inner product on
$H\diamond \widetilde H$, which makes $H\diamond \widetilde H$ a
Hilbert $B$-module and the action of $A$ on $H\diamond \widetilde
H$ a unital $*$-homomorphism $\rho\diamond \widetilde\rho\colon
A\to \mathbb{B}(H\diamond\widetilde H)$.
\end{lemma}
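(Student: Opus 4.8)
The plan is to realize $\langle\,\cdot\,,\,\cdot\,\rangle_{H\diamond\widetilde H}$ as the pull-back, along an explicit $B$-linear map $\Theta$, of the standard $B$-valued inner product on the Hilbert $B$-module $H\oplus\widetilde H\oplus H$, and to deduce everything from this description, using repeatedly the relations $\psi_1^2+\psi_2^2=1$ and $[\psi_1,\psi_2]=0$.

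First, the symbols $\psi_j^2\xi$ and $\psi_1\psi_2\xi$ in the formula are unambiguous: since $\psi_j^2\in J_j$ and $\psi_1\psi_2\in J$, the elements $(\rho\diamond\widetilde\rho)(\psi_j^2)\xi$ and $(\rho\diamond\widetilde\rho)(\psi_1\psi_2)\xi$ lie in $(H\diamond\widetilde H)_j$ and $(H\diamond\widetilde H)_0$, and under the isomorphisms \eqref{eq6} together with $H_0\simeq\widetilde H_0$ (with $T$ suppressed as in the paper) they correspond to well-defined elements, namely $\rho(\psi_j^2)\xi_1+\widetilde\rho(\psi_j^2)\xi_2\in H_j$ and $\rho(\psi_1\psi_2)\xi_1+\widetilde\rho(\psi_1\psi_2)\xi_2\in H_0$ for any representative $(\xi_1,\xi_2)$ of $\xi$, plainly independent of the choice (here and below $H_2$ means $\widetilde H_2$). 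Assembling $\xi\mapsto\psi_1^2\xi$, $\xi\mapsto\psi_2^2\xi$, $\xi\mapsto\sqrt2\,\psi_1\psi_2\xi$ into a map $\Theta\colon H\diamond\widetilde H\to H\oplus\widetilde H\oplus H$, the stated formula is precisely $\langle\xi,\eta\rangle_{H\diamond\widetilde H}=\langle\Theta\xi,\Theta\eta\rangle$; in particular it is well defined and yields a $B$-sesquilinear, conjugate-symmetric, positive form. Moreover, expanding with $\psi_1^4+\psi_2^4+2\psi_1^2\psi_2^2=1$ one checks that each isomorphism $(H\diamond\widetilde H)_i\simeq H_i$ ($i=0,1,2$) is \emph{isometric}; equivalently, the form restricted to $(H\diamond\widetilde H)_i$ equals $\langle\,\cdot\,,\,\cdot\,\rangle_{H_i}$.

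Definiteness now follows quickly: if $\langle\xi,\xi\rangle_{H\diamond\widetilde H}=0$ then each of the three summands is $0$, so $\psi_1^2\xi=0$ in $H$ and $\psi_2^2\xi=0$ in $\widetilde H$; injectivity of the above isomorphisms gives $(\rho\diamond\widetilde\rho)(\psi_1^2)\xi=(\rho\diamond\widetilde\rho)(\psi_2^2)\xi=0$, hence $\xi=(\rho\diamond\widetilde\rho)(\psi_1^2+\psi_2^2)\xi=0$. For completeness I would compare $\|\cdot\|_{H\diamond\widetilde H}$ with the quotient norm $\|\cdot\|_q$ arising from $H\diamond\widetilde H=(H_1\oplus\widetilde H_2)/\Delta'$, where $\Delta'=\{(\zeta,-\zeta):\zeta\in H_0\}$ is a closed submodule, so that $(H\diamond\widetilde H,\|\cdot\|_q)$ is complete. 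On the one hand, for any representative $(\xi_1,\xi_2)$ each component of $\Theta\xi$ has norm at most $\|\xi_1\|+\|\xi_2\|$ (using $\|\psi_j\|\le1$ and the isometry of $T$), so $\|\xi\|_{H\diamond\widetilde H}\le C\|\xi\|_q$. On the other hand $(\psi_1^2\xi,\psi_2^2\xi)$ is itself a representative of $\xi$ — by \eqref{eq6}, $\xi=(\rho\diamond\widetilde\rho)(\psi_1^2)\xi+(\rho\diamond\widetilde\rho)(\psi_2^2)\xi$ — and, since $\langle\xi,\xi\rangle_{H\diamond\widetilde H}\ge\langle\psi_1^2\xi,\psi_1^2\xi\rangle_H+\langle\psi_2^2\xi,\psi_2^2\xi\rangle_{\widetilde H}$, its norm is $\le\|\xi\|_{H\diamond\widetilde H}$; thus $\|\xi\|_q\le\|\xi\|_{H\diamond\widetilde H}$. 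The two norms being equivalent, $H\diamond\widetilde H$ is complete, hence a Hilbert $B$-module.

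It remains to treat the $A$-action. That $\rho\diamond\widetilde\rho$ is a unital homomorphism of $A$ into the algebra of bounded $B$-linear operators on $H\diamond\widetilde H$ is routine: well-definedness on the quotient uses the $A$-invariance of $H_1$, $\widetilde H_2$, $H_0$ and the $A$-equivariance of $T$, and boundedness follows from the case of $H_1\oplus\widetilde H_2$ via the norm equivalence. The substantive point — the one I expect to be the main obstacle — is adjointability, i.e.\ $\langle(\rho\diamond\widetilde\rho)(a)\xi,\eta\rangle_{H\diamond\widetilde H}=\langle\xi,(\rho\diamond\widetilde\rho)(a^*)\eta\rangle_{H\diamond\widetilde H}$. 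A direct computation yields $\Theta\bigl((\rho\diamond\widetilde\rho)(a)\xi\bigr)=\bigl(\rho(a)\oplus\widetilde\rho(a)\oplus\rho(a)\bigr)\Theta\xi+E(a)\xi$, where $E(a)$ is built from the commutators $[\psi_1^2,a]$, $[\psi_2^2,a]$, $[\psi_1\psi_2,a]$; the key observation is that, since $\psi_1^2=1-\psi_2^2$ with $\psi_1^2\in J_1$ and $\psi_2^2\in J_2$, one has $[\psi_1^2,a]=-[\psi_2^2,a]\in J_1\cap J_2=J$, and likewise $[\psi_1\psi_2,a]\in J$, so $E(a)$ takes values in the three ``overlap'' summands $H_0\subset H$, $\widetilde H_0\subset\widetilde H$, $H_0\subset H$. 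Using that $\rho(a)\oplus\widetilde\rho(a)\oplus\rho(a)$ is adjointable on $H\oplus\widetilde H\oplus H$, the required identity collapses to $\langle E(a)\xi,\Theta\eta\rangle=\langle\Theta\xi,E(a^*)\eta\rangle$. This last equality is where the noncommutativity of $A$ actually enters; I would prove it by a lengthy but mechanical expansion using $[\psi_1,\psi_2]=0$, the self-adjointness of $\psi_1,\psi_2$, the unitarity and $A$-equivariance of $T$, and — to handle the ``cross'' pairings between $H$ and $\widetilde H$ — the fact that $\varphi u_\lambda\to\varphi$ for $\varphi\in J$ and $\{u_\lambda\}$ an approximate unit of $J$; the algebraic core of the cancellation is the relation $\bigl(\psi_1^2 a\psi_2^2+\psi_2^2 a\psi_1^2-2\psi_1\psi_2 a\psi_1\psi_2\bigr)^*=\psi_1^2 a^*\psi_2^2+\psi_2^2 a^*\psi_1^2-2\psi_1\psi_2 a^*\psi_1\psi_2$. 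Everything else is formal.
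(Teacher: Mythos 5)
The paper explicitly defers this proof (``Full proofs will be given elsewhere''), so there is no in-paper argument to compare against; I can only assess your proposal on its own terms.

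Your framework is sound and efficient: realizing the form as the pullback $\langle\Theta\cdot,\Theta\cdot\rangle$ along the isometric embedding $\Theta\colon\xi\mapsto(\psi_1^2\xi,\psi_2^2\xi,\sqrt2\,\psi_1\psi_2\xi)$ into $H\oplus\widetilde H\oplus H$ gives positivity for free; definiteness follows from injectivity of $\Theta$ via $\psi_1^2+\psi_2^2=1$; the isometry of each identification $(H\diamond\widetilde H)_i\simeq H_i$ follows from $(\psi_1^2+\psi_2^2)^2=1$; and the two-sided norm comparison with the quotient norm on $(H_1\oplus\widetilde H_2)/\Delta'$ correctly yields completeness. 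The observation that $[\psi_1^2,a]=-[\psi_2^2,a]\in J$ and $[\psi_1\psi_2,a]\in J$ is exactly the right ingredient for making sense of $E(a)$, and the reduction of adjointability to $\langle E(a)\xi,\Theta\eta\rangle=\langle\Theta\xi,E(a^*)\eta\rangle$ is correct.

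But that last identity is the mathematical heart of the lemma, and you do not prove it. You assert it follows from a ``lengthy but mechanical expansion,'' gesturing at an approximate-unit argument and calling the self-adjointness of $\psi_1^2 a\psi_2^2+\psi_2^2 a\psi_1^2-2\psi_1\psi_2 a\psi_1\psi_2$ the ``algebraic core,'' without exhibiting the expansion. This is not a cosmetic omission: the image of $\Theta$ is \emph{not} invariant under $a\oplus a\oplus a$ (unless $a$ commutes with the $\psi_j$), so adjointability cannot be inherited from the ambient module and really does hinge on a nontrivial cancellation. Moreover, when one writes $\xi=[(\xi_1,\xi_2)]$, $\eta=[(\eta_1,\eta_2)]$ and expands, the $(\xi_1,\eta_1)$ and $(\xi_2,\eta_2)$ blocks of the required identity collapse to algebra in a single copy of $A$ (and indeed reduce to the relation you name), but the \emph{cross} pairings mix $\langle\cdot,\cdot\rangle_H$ with $\langle\cdot,\cdot\rangle_{\widetilde H}$ through $T$, whose $A$-equivariance is only available after one has pushed enough factors of $\psi_j$ onto a vector to land it in $H_0$ or $\widetilde H_0$; one cannot freely move $T$ past $\psi_j$ acting on all of $H_1$ or $\widetilde H_2$. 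Your sketch does not engage with this, and the invocation of approximate units does not obviously address it. Until those cross terms are written out and shown to cancel, the central claim of the lemma — that $\rho\diamond\widetilde\rho$ lands in $\mathbb{B}(H\diamond\widetilde H)$ — remains unproved.
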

Now set
\begin{equation}\label{eq8}
    F\diamond \widetilde F=\psi_1 F\psi_1+\psi_2 \widetilde F\psi_2\colon
    H\diamond\widetilde H\longrightarrow H\diamond\widetilde H.
\end{equation}
This is well defined. Indeed, for example, if $\xi \in
H\diamond\widetilde H$, then, in view of our identifications,
$\psi_1\xi\in (H\diamond\widetilde H)_1=H_1\subset H$, hence
$F\psi_1\xi$ is a well-defined element of $H$, and hence
$\psi_1F\psi_1\xi\in H_1\subset H\diamond\widetilde H$ is a
well-defined element of $H\diamond\widetilde H$.

\begin{theorem}\label{t1}
The triple $x\diamond\widetilde x=(H\diamond\widetilde
H,\rho\diamond \widetilde\rho,F\diamond \widetilde F)$ is a unital
Kasparov $(A,B)$-module, which is independent modulo
\textup{``}compact\textup{''} perturbations of the choice of the
partition of unity \eqref{eq1} and agrees with $x$ on $J_1$ and
with $\widetilde x$ on $J_2$.
\end{theorem}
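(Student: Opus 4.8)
The plan is to treat the three assertions in turn, the structural one being already in hand. Lemma~\ref{le4} provides the Hilbert $B$-module $H\diamond\widetilde H$ and the unital $*$-homomorphism $\rho\diamond\widetilde\rho$, so what remains is to verify the Kasparov relations \eqref{eq2} for $F\diamond\widetilde F$, to check the two agreement statements, and to prove partition-independence. The device that makes all of this tractable is that computations in $H\diamond\widetilde H$ descend to $H$ and to $\widetilde H$: one first records the auxiliary fact that $\xi\mapsto\rho(\psi_1)\xi$ and $\xi\mapsto\widetilde\rho(\psi_2)\xi$ identify $(H\diamond\widetilde H)_1$ and $(H\diamond\widetilde H)_2$ isometrically with $H_1\subset H$ and $\widetilde H_2\subset\widetilde H$, and that conjugating a ``compact'' operator on $H$ or on $\widetilde H$ by these maps yields a ``compact'' operator on $H\diamond\widetilde H$. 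After this, $\psi_1 F\psi_1$ and $\psi_2\widetilde F\psi_2$ may be manipulated modulo $\mathbb{K}(H\diamond\widetilde H)$ using only \eqref{eq2} for $x$ and for $\widetilde x$, bearing in mind that $F$ lives on $H$ and $\widetilde F$ on $\widetilde H$ and that they communicate only on $H_0=\widetilde H_0$, where \eqref{eq4} holds.

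I would first dispose of the easy items. Self-adjointness modulo ``compacts'' is immediate: $\psi_i=\psi_i^*$ and $F-F^*$, $\widetilde F-\widetilde F^*$ are ``compact'', so $(\psi_1 F\psi_1)^*\sim\psi_1 F\psi_1$ and $(\psi_2\widetilde F\psi_2)^*\sim\psi_2\widetilde F\psi_2$. For the agreement of $x\diamond\widetilde x$ with $x$ on $J_1$, the relevant unitary being the tautological identification $(H\diamond\widetilde H)_1=H_1$, take $c,d\in J_1$ and write $c(F\diamond\widetilde F)d=c\psi_1 F\psi_1 d+c\psi_2\widetilde F\psi_2 d$; since $c\psi_2,\psi_2 d\in J$, \eqref{eq4} turns the second summand into $c\psi_2 F\psi_2 d$ modulo ``compacts'', and commuting $F$ past the $\psi_i$ yields $c(\psi_1^2+\psi_2^2)Fd=cFd$ modulo ``compacts'', as required; the agreement with $\widetilde x$ on $J_2$ is symmetric.

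The heart of the matter is $(F\diamond\widetilde F)^2\sim1$. Expanding the square gives two diagonal terms, which reduce by $[F,\psi_j]\sim0$, $F^2\sim1$ and $[\widetilde F,\psi_j]\sim0$, $\widetilde F^2\sim1$ to $\psi_1^4$ and $\psi_2^4$, and the two cross terms, of which $\psi_1 F\psi_1\psi_2\widetilde F\psi_2$ is the real point (the other being its adjoint modulo ``compacts''). For the cross term one uses $\psi_1\psi_2\in J$ together with the successive decompositions $\psi_2^{2k-1}=\psi_2^{2k+1}+\psi_1^2\psi_2^{2k-1}$: in the branch carrying the factor $\psi_1^2\psi_2^{2k-1}\in J$ the operator $\widetilde F$ is flanked on both sides by elements of $J$ (the left flank being $\psi_1\psi_2$), so \eqref{eq4} turns it into $F$, after which the resulting purely-$F$ product collapses, via $[F,\psi_j]\sim0$ and $F^2\sim1$, to $\psi_1^4\psi_2^{2k}$; the telescoping identity $\sum_{k=1}^n\psi_1^4\psi_2^{2k}=\psi_1^2\psi_2^2-\psi_1^2\psi_2^{2n+2}$, the estimates $\lVert\psi_1^2\psi_2^{2n+2}\rVert\to0$ and $\lVert\psi_1 F\psi_1\psi_2\widetilde F\psi_2^{2n+1}\rVert=O(\lVert\psi_1\psi_2^{2n+1}\rVert)$ modulo a ``compact'' operator, and the closedness of $\mathbb{K}(H\diamond\widetilde H)$ together give $\psi_1 F\psi_1\psi_2\widetilde F\psi_2\sim\psi_1^2\psi_2^2$, hence the same for the other cross term. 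Summing, $(F\diamond\widetilde F)^2\sim\psi_1^4+\psi_2^4+2\psi_1^2\psi_2^2=(\psi_1^2+\psi_2^2)^2=1$. The relation $[F\diamond\widetilde F,\varphi]\sim0$ for $\varphi\in A$ is proved in the same style: it suffices to show $\psi_1^2[F\diamond\widetilde F,\varphi]\sim0$ and $\psi_2^2[F\diamond\widetilde F,\varphi]\sim0$, and each, after inserting $1=\psi_1^2+\psi_2^2$, reduces to the agreement identity on $J_1$ (resp.\ $J_2$), to $[F,\varphi]\sim0$ (resp.\ $[\widetilde F,\varphi]\sim0$), and to one more telescoping/norm estimate for the mixed piece.

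Finally, independence of the partition: two admissible partitions satisfying \eqref{eq1} are joined by the norm-continuous path whose square $(\psi_1^{(s)})^2$ is the affine interpolation of the two squares, with $\psi_i^{(s)}$ the positive square roots (these commute, being functions of the same element). Using \eqref{eq4} and the fact that $(\psi_1^{(0)})^2-(\psi_1^{(1)})^2\in J$, one checks that the resulting Kasparov modules are related by a unitary isomorphism of the underlying bimodules together with a ``compact'' perturbation of the operator, so that the construction is well defined up to ``compact'' perturbation. The main obstacle throughout is precisely the cross-term analysis of the square, together with its analogues for the commutator and for partition-independence: that is the only place where \eqref{eq4} genuinely enters, and it is where the algebraic fact that $\psi_1\psi_2$---and, after iteration, most of $\psi_2$---lies in $J$ has to be converted, via the telescoping estimate above, into an honest ``compactness'' statement; everything else is bookkeeping with \eqref{eq2}.
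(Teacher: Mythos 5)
The paper defers all detailed arguments (``Full proofs will be given elsewhere'') and gives no proof of Theorem~\ref{t1}, so there is nothing in the text to compare against. Your overall strategy --- verify the Kasparov relations directly, reduce the agreement claims to \eqref{eq4} after inserting $1=\psi_1^2+\psi_2^2$, and isolate the cross term $\psi_1F\psi_1\psi_2\widetilde F\psi_2$ as the only genuine difficulty, controlled by the telescoping decomposition $\psi_2^{2k-1}=\psi_2^{2k+1}+\psi_1^2\psi_2^{2k-1}$ --- is sound, and I checked the key limit: after commuting $\widetilde F$ past $\psi_2^{2n}$ (one additional ``compact'' error, which you compress into the phrase ``modulo a compact operator''), the residual term $\psi_1F\psi_1\psi_2^{2n+1}\widetilde F\psi_2$ has norm $O(\lVert\psi_1\psi_2^{2n+1}\rVert)\to 0$, and since each partial telescoping identity holds modulo $\mathbb{K}(H\diamond\widetilde H)$, closedness of $\mathbb{K}$ finishes the job. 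You should also record once the standard transfer step: \eqref{eq4} gives an element of $\mathbb{K}(H_0)$, and to view it in $\mathbb{K}(H\diamond\widetilde H)$ you factor $c,d\in J$ as products of two ideal elements and sandwich the $\mathbb{K}(H_0)$ operator between adjointable maps $H\diamond\widetilde H\to H_0$ and $H_0\to H\diamond\widetilde H$.

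The one place where the argument as written does not yet establish what is claimed is the partition-independence. Theorem~\ref{t1} asserts independence modulo \emph{compact perturbation}, which is strictly stronger than operator homotopy; a norm-continuous path of partitions, by itself, only gives a homotopy of Kasparov modules. What one actually needs is, first, that the Hilbert module structure from Lemma~\ref{le4} does not depend on the partition at all --- this can be seen directly from the formula, because $\psi_1^4+\psi_2^4+2\psi_1^2\psi_2^2=1$ and the three inner products are compatible on $H_0\cong\widetilde H_0$, so the underlying $(A,B)$-bimodule and its $B$-valued inner product are canonical; and, second, that for two admissible partitions $(\psi_i)$, $(\phi_i)$ the operators $\psi_1F\psi_1+\psi_2\widetilde F\psi_2$ and $\phi_1F\phi_1+\phi_2\widetilde F\phi_2$ differ by a ``compact'' operator. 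The latter follows from $\psi_1^2-\phi_1^2=\phi_2^2-\psi_2^2\in J$ together with \eqref{eq4} and a telescoping estimate of the same kind you already ran for the square; your observation that $(\psi_1^{(0)})^2-(\psi_1^{(1)})^2\in J$ is exactly the right lever, but it needs to be used in a direct comparison of the two operators rather than routed through a homotopy. With that replacement the proof is complete.
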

In a similar way, one defines the Kasparov module $\widetilde
x\diamond x$, which agrees with $\widetilde x$ on $J_1$ and with
$x$ on $J_2$.

\subsection{Main result}\label{4}

Now we are in a position to state the main result of the paper. For
a Kasparov $(A,B)$-module $y$, let $[y]\in KK(A,B)$ be the
corresponding class in Kasparov's $KK$-theory.
\begin{theorem}
Under the above assumptions, one has
\begin{equation}\label{eq9}
    [x]+[\widetilde x]=[x\diamond \widetilde x]+[\widetilde x \diamond x].
\end{equation}
\end{theorem}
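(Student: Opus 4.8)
The plan is to prove the identity by exhibiting an explicit Kasparov $(A,B)$-module whose class equals both sides, using the standard fact that $KK(A,B)$ is computed by homotopy classes of Kasparov modules and that a direct sum $[y_1]+[y_2]=[y_1\oplus y_2]$. Thus it suffices to construct an operator homotopy (or a chain of such, up to compact and degenerate perturbations) connecting $x\oplus\widetilde x$ to $(x\diamond\widetilde x)\oplus(\widetilde x\diamond x)$. First I would note that, by the decomposition $H\simeq(H_1\oplus H_2)/\Delta$ of Section~\ref{2} and the analogous decomposition of $\widetilde H$, together with the definition \eqref{eq5}, there is a natural unitary isomorphism of Hilbert $B$-modules
\begin{equation*}
    H\oplus\widetilde H\;\simeq\;(H\diamond\widetilde H)\oplus(\widetilde H\diamond H)
\end{equation*}
that intertwines the $A$-actions; concretely, on the level of the pairs $(\xi_1,\xi_2)\in H_1\oplus H_2$ and $(\widetilde\xi_1,\widetilde\xi_2)\in\widetilde H_1\oplus\widetilde H_2$ (with the identification via $T$ on $H_0$) it simply regroups the components as $(\xi_1,\widetilde\xi_2)$ and $(\widetilde\xi_1,\xi_2)$, the inner-product bookkeeping being exactly the content of Lemma~\ref{le4} applied on both sides. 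So the underlying graded Hilbert modules and representations of $x\oplus\widetilde x$ and $(x\diamond\widetilde x)\oplus(\widetilde x\diamond x)$ are identified, and the whole problem reduces to comparing the two Fredholm operators on this common module.

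Under this identification the operator of $x\oplus\widetilde x$ is $F\oplus\widetilde F$, while the operator of $(x\diamond\widetilde x)\oplus(\widetilde x\diamond x)$ is, up to the identifications, the ``off-diagonal regrouped'' operator built from the four blocks $\psi_iF\psi_i$ and $\psi_i\widetilde F\psi_i$. Writing everything in terms of the commuting positive elements $\psi_1,\psi_2$ with $\psi_1^2+\psi_2^2=1$, one sees that both operators, modulo $\mathbb{K}$, are assembled from $F$, $\widetilde F$, the identification $T$ on $H_0$, and the partition of unity; the key algebraic point is that $[F,\psi_j]\sim0$ and $[\widetilde F,\psi_j]\sim0$ (since $\psi_j\in J_j\subset A$ and \eqref{eq2} holds), so that $\psi_jF\psi_j\sim F\psi_j^2$ etc., and the agreement condition \eqref{eq4} lets one move $T$ past $F$ and $\widetilde F$ on $H_0$ modulo $\mathbb{K}$. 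The natural homotopy to use is the standard ``rotation'' homotopy: interpolate between the two operators by a family of the form $\cos(\tfrac{\pi t}{2})(\text{one operator})+\sin(\tfrac{\pi t}{2})(\text{other operator})$ after first embedding into a $2\times2$ matrix picture, or, more directly, deform the partition $(\psi_1,\psi_2)$ together with the gluing so as to slide the two halves past one another. I would check that each member of the family is a Kasparov module — i.e.\ that the defining relations \eqref{eq2} continue to hold modulo $\mathbb{K}$ along the homotopy — which follows from the commutation relations just described, and that endpoints are the desired modules up to compact and degenerate summands (degenerate Kasparov modules being $KK$-trivial).

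The main obstacle, I expect, is the careful handling of the gluing data, in particular verifying that the off-diagonal operator coming from $(x\diamond\widetilde x)\oplus(\widetilde x\diamond x)$ genuinely differs from $F\oplus\widetilde F$ only by a compact operator \emph{and} a term that can be continuously deformed away without leaving the class of Kasparov modules. This is exactly where the agreement hypothesis \eqref{eq4} — the compatibility of $T$ with $F$ and $\widetilde F$ on $H_0$ up to compacts — does the real work: without it the regrouping would change the $KK$-class. So the crux is to isolate, on the common Hilbert module $H\oplus\widetilde H\simeq(H\diamond\widetilde H)\oplus(\widetilde H\diamond H)$, the precise compact ``defect'' between the two operators and to write down an honest operator-homotopy (possibly passing through an intermediate module supported near $H_0$, where \eqref{eq4} applies) realizing $[F\oplus\widetilde F]=[(F\diamond\widetilde F)\oplus(\widetilde F\diamond F)]$ in $KK(A,B)$. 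Once that homotopy is in hand, the equality \eqref{eq9} is immediate from additivity of the $KK$-class under direct sums.
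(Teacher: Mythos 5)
Your outline starts from the right idea — a unitary identification $H\oplus\widetilde H\simeq(H\diamond\widetilde H)\oplus(\widetilde H\diamond H)$ — but it goes wrong at two points, the second of which removes the core of the argument. First, the ``regrouping'' map you describe, $(\xi_1,\xi_2;\widetilde\xi_1,\widetilde\xi_2)\mapsto(\xi_1,\widetilde\xi_2;\widetilde\xi_1,\xi_2)$, is not well defined on the quotients: if $\xi_1=-\xi_2\in H_0$ and $\widetilde\xi_1=-\widetilde\xi_2\in\widetilde H_0$, then the image $(\xi_1,\widetilde\xi_2)$ is zero in $H\diamond\widetilde H$ only when $T\xi_1+\widetilde\xi_2=0$, which is not forced. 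The unitary that does the job is the rotation-type $U$ sending $(\xi,\eta)$ to $(\psi_1\xi+\psi_2\eta,\,-\psi_2\xi+\psi_1\eta)$; it is built from the partition of unity, not from a shuffle of components. Second, and more seriously, $U$ does \emph{not} intertwine $\rho\oplus\widetilde\rho$ with $\rho\diamond\widetilde\rho\oplus\widetilde\rho\diamond\rho$: pulling the latter back along $U$ yields the representation $\widehat\rho$ of \eqref{eqA}, whose off-diagonal entries $\psi_2a\psi_1-\psi_1a\psi_2\in J$ are generically nonzero, and there is no evident unitary on $H\oplus\widetilde H$ identifying $\widehat\rho$ with $\rho\oplus\widetilde\rho$ (a candidate intertwiner with corner $\psi_2\colon H\to\widetilde H$ does not even make sense, since $\psi_2 H\not\subset H_0$). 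Your premise that ``the underlying graded Hilbert modules and representations \ldots are identified, and the whole problem reduces to comparing the two Fredholm operators'' therefore inverts the situation.

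Indeed, after applying $U$, the operator $F\oplus\widetilde F$ is already carried modulo compacts to $(F\diamond\widetilde F)\oplus(\widetilde F\diamond F)$ — this is \eqref{eq10} — so an operator homotopy has nothing left to accomplish. What genuinely remains is to compare the two \emph{representations} $\rho\oplus\widetilde\rho$ and $\widehat\rho$ on the same module $H\oplus\widetilde H$ with the same operator $F\oplus\widetilde F$. The paper does this with a homotopy of representations $\widehat\rho_t$ obtained by replacing $(\psi_1,\psi_2)$ with $(\psi_{1t},\psi_{2t})=(t\psi_1,\sqrt{1-t^2\psi_1^2})$, holding $F\oplus\widetilde F$ fixed, and checking at each $t$ that the off-diagonal entries of $\widehat\rho_t(a)$ lie in $J$ and that the Kasparov relations persist (using agreement on $J$ once more). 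Your proposed $\cos/\sin$ rotation of the operator, even if made precise, cannot substitute for this step; the missing ingredient is precisely the deformation of the representation.
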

Let us give a sketch of the proof. First, we note that, under our
identifications, the formula
\begin{equation*}
    \begin{pmatrix}
      \xi \\
      \eta
    \end{pmatrix}\longmapsto
    \begin{pmatrix}
      \psi_1\xi+\psi_2\eta \\
      -\psi_2\xi+\psi_1\eta
    \end{pmatrix}
\end{equation*}
gives a well-defined unitary isomorphism
\begin{equation*}
    U\colon H\oplus\widetilde H\longrightarrow (H\diamond\widetilde H)\oplus (\widetilde H\diamond H)
\end{equation*}
of Hilbert $B$-modules such that
\begin{equation}\label{eq10}
    U(F\oplus\widetilde F)U^*\sim (F\diamond\widetilde F)\oplus (\widetilde F\diamond F).
\end{equation}
Moreover, this mapping is an isomorphism of $(A,B)$-bimodules,
where the action of $A$ on $(H\diamond\widetilde H)\oplus
(\widetilde H\diamond H)$ is the direct sum of the actions
$\rho\diamond\widetilde\rho$ and $\widetilde\rho\diamond\rho$,
while the action of $A$ on $H\oplus\widetilde H$ is given by the
matrix
\begin{equation}\label{eqA}
    \widehat\rho(a)=\begin{pmatrix}
      \psi_1a\psi_1+\psi_2a\psi_2 & \psi_2a\psi_1-\psi_1a\psi_2 \\
      \psi_1a\psi_2-\psi_2a\psi_1 & \psi_1a\psi_1+\psi_2a\psi_2
    \end{pmatrix},\qquad a\in A.
\end{equation}
Note that the right-hand side is well defined because the
off-diagonal entries lie in $J$ and hence take $H$ as well as
$\widetilde H$ to $H_0$, which lies in both. Moreover,
$[\widehat\rho(a),F\oplus\widetilde F]\sim0$, because $F$ and
$\widetilde F$ agree on $J$.  Thus, it remains to prove that the
Kasparov modules $(H\oplus\widetilde
H,\rho\oplus\widetilde\rho,F\oplus\widetilde F)$ and
$(H\oplus\widetilde H,\widehat\rho,F\oplus\widetilde F)$ define the
same element in $KK(A,B)$. To this end, we construct a homotopy
$(H\oplus\widetilde H,\widehat\rho_t, F\oplus\widetilde F)$ of
Kasparov modules such that
$\widehat\rho_0=\rho\oplus\widetilde\rho$ and
$\widehat\rho_1=\widehat\rho$. Namely, for $a\in A$ we set
\begin{equation}\label{eqB}
        \widehat\rho_t(a)=\begin{pmatrix}
      \psi_{1t}a\psi_{1t}+\psi_{2t}a\psi_{2t} & \psi_{2t}a\psi_{1t}-\psi_{1t}a\psi_{2t} \\
      \psi_{1t}a\psi_{2t}-\psi_{2t}a\psi_{1t} & \psi_{1t}a\psi_{1t}+\psi_{2t}a\psi_{2t}
    \end{pmatrix},
\end{equation}
where $\psi_{1t}=t\psi_1$ and $\psi_{2t}=\sqrt{1-t^2\psi_1^2}$.

First, we should prove that the operator \eqref{eqB} is well
defined on $H\oplus\widetilde H$. To this end, it suffices to show
that the off-diagonal entries lie in $J$. We have
$\psi_{2t}=\sqrt{1-t^2}+\bigl(\sqrt{1-t^2+t^2\psi_2^2}-\sqrt{1-t^2}\bigr)$;
the term in parentheses lies in $J_2$ by functional calculus, and
we obtain $\psi_{2t}a\psi_{1t}-\psi_{1t}a\psi_{2t}\equiv
t\sqrt{1-t^2}[a,\psi_1]\mod J$. It remains to note that
$[a,\psi_1]\in J_1$ and $[a,\psi_1]=[a,\sqrt{1-\psi_2^2}]
=[a,\sqrt{1-\psi_2^2}-1]\in J_2$, again by functional calculus, so
that $[a,\psi_1]\in J$ and we are done.

Now the properties $\psi_{1t}^2+\psi_{2t}^2=1$ and
$[\psi_{1t},\psi_{2t}]=0$ imply that $\widehat\rho_t$ is indeed a
homomorphism and that $[\widehat\rho_t(a),F\oplus\widetilde
F]\sim0$. (Here we again use the fact that $F$ and $\widetilde F$
agree on $J$.) This completes the proof.


\begin{thebibliography}{99}

\bibitem{GrLa1}
M.~Gromov and H.~B.~Lawson Jr., ``Positive scalar curvature and the
Dirac operator on complete Riemannian manifolds,'' \textit{Publ.
Math. IHES} \textbf{58} (1983), 295--408.

\bibitem{R:NaSt7}
V.~E.~Nazaikinskii and B.~Yu.~Sternin, ``The index locality
principle in elliptic theory,'' \textit{Funktsional. Anal. i
Prilozhen.} \textbf{35}:2 (2001), 37--52. English transl.
\textit{Functional Anal. Appl.} \textbf{35}:2 (2001), 111--123.

\bibitem{AAA}
V.~E.~Nazaikinskii and B.~Yu.~Sternin, ``Surgery and the relative
index in elliptic theory,'' \textit{Abstract and Applied Analysis}
\textbf{2006} (2006), Article ID 98081, 1--16.

\bibitem{NSScS99}
V.~Nazaikinskii, A.~Savin, B.-W.~Schulze, and B.~Sternin,
\textit{Elliptic Theory on Singular Manifolds}, Chapman \&
Hall/CRC, Boca Raton, 2005.

\bibitem{Bun95}
U.~Bunke, ``A $K$-theoretic relative index theorem and Callias-type
Dirac operators, \textit{Math. Ann.} \textbf{303}:2  (1995),
241--279.

\bibitem{MisFom79}
A.~S.~Mishchenko and A.~T.~Fomenko, ``The index of elliptic
operators over $C^*$-algebras,'' \textit{Izv. Akad. Nauk SSSR} Ser.
Matem. \textbf{43}: (1979), 831--859. English transl. \textit{Math.
USSR-Izv.} \textbf{15}:1 (1980), 87--112.

\bibitem{Ped1}
G.~K.~Pedersen, \textit{$C^*$-Algebras and Their Automorphism
Groups}, Academic Press, London--New York, 1979.

\bibitem{HiRo1}
N.~Higson and J.~Roe, \textit{Analytic $K$-Homology}, Oxford
University Press, Oxford, 2000.

\bibitem{Bla1}
B.~Blackadar, \textit{$K$-Theory for Operator Algebras}, Cambridge
University Press, Cambridge, 1998.




\end{thebibliography}
\end{document}